\def\th@plain{%
  \upshape 
}
\renewenvironment{proof}[1][\proofname]{\par
  \pushQED{\qed}%
  \normalfont \topsep6\p@\@plus6\p@\relax
  \trivlist
  \item[\hskip\labelsep
        \bfseries
    #1\@addpunct{.}]\ignorespaces
}{%
  \popQED\endtrivlist\@endpefalse
}
\newtheorem{theorem}{Theorem}
\numberwithin{theorem}{section}
\newtheorem{corollary}{Corollary}
\newtheorem*{conjecture*}{Conjecture}
\newtheorem{claim}{Claim}
\newtheorem{observation}{Observation}
\theoremstyle{definition}
\newcommand{\etal}{et~al.\ }
\newcommand{\ie}{i.e.,\ }
\def\int(#1){\mathrm{int}(#1)}
\def\ext(#1){\mathrm{ext}(#1)}
\def\Int(#1){\mathrm{Int}(#1)}
\def\Ext(#1){\mathrm{Ext}(#1)}
\newcommand{\Lfloor}{\left\lfloor}
\newcommand{\Rfloor}{\right\rfloor}
\newcommand{\Lceil}{\left\lceil}
\newcommand{\Rceil}{\right\rceil}
\DeclareMathOperator {\diam}{diam}
\numberwithin{figure}{section}
\numberwithin{equation}{section}
\newtheorem*{Murty-Simon}{Murty-Simon Conjecture}
\newcommand{\un}{\mathrm{un}}
\begin{document}%
\title{On Murty-Simon Conjecture}
\author{Tao Wang\footnote{{\tt Corresponding
author: wangtao@henu.edu.cn}}\\
{\small \textsuperscript{a}\unskip Institute of Applied Mathematics}\\
{\small College of Mathematics and Information Science}\\
{\small Henan University, Kaifeng, 475004, P. R. China}}
\date{}
\maketitle

\begin{abstract}%
A graph is diameter two edge-critical if its diameter is two and the deletion of any edge increases the diameter. Murty and Simon conjectured that the number of edges in a diameter two edge-critical graph on $n$ vertices is at most $\left \lfloor \frac{n^{2}}{4} \right \rfloor$ and the extremal graph is the complete bipartite graph $K_{\left\lfloor
\frac{n}{2} \right\rfloor, \left\lceil \frac{n}{2} \right\rceil}$. In the series papers \cite{Haynes2011a, Haynes2011, Haynes2012}, the Murty-Simon Conjecture stated by Haynes \etal is
not the original conjecture, indeed, it is only for the diameter two edge-critical graphs of even
order. Haynes \etal proved the conjecture for the graphs whose complements have diameter three but only with even vertices. In this paper, we prove the Murty-Simon Conjecture for the graphs whose complements have diameter three, not only with even vertices but also odd ones.
\end{abstract}

\section{Introduction}
All graphs considered in this paper are simple. We adopt notation and terminology commonly used in the literature. Let $G$ be a graph with vertex set $V(G)$ and edge set $E(G)$. The
{\em neighborhood} of a vertex $v$ in a graph $G$, denoted by
$N_{G}(v)$, is the set of all the vertices adjacent to the vertex
$v$, \ie $N_{G}(v) = \{u \in V(G) \mid uv \in E(G)\}$, and the {\em
closed neighborhood} of a vertex $v$ in $G$, denoted by $N_{G}[v]$,
is defined by $N_{G}[v] = N_{G}(v) \cup \{v\}$. For a subset $S
\subseteq V$, the {\em neighborhood of the set $S$} in $G$ is the
set of all vertices adjacent to vertices in $S$, this set is denoted
by $N_{G}(S)$, and the {\em closed neighborhood of $S$} by $N_{G}[S]
= N_{G}(S) \cup S$. Let $S$ and $T$ be two subsets (not necessarily
disjoint) of $V(G)$, $[S, T]$ denotes the set of edges of $G$
with one end in $S$ and the other in $T$, and $e_{G}(S, T)=|[S,
T]|$. If every vertex in $S$ is adjacent to each vertex in $T$, then we say that $[S, T]$ is full. If $S \subseteq V(G)$, and $u, v$ are two nonadjacent vertices in $G$, then we say that $xy$ is a {\em missing edge} in $S$ (rather than ``$uv$ is a missing edge in $G[S]$'').

The {\em complement} $G^{c}$ of a simple graph $G = (V, E)$ is the
simple graph with vertex set $V$, two vertices are adjacent in
$G^{c}$ if and only if they are not adjacent in $G$.

Given a graph $G$ and two vertices $u$ and $v$ in it, the {\em distance}
between $u$ and $v$ in $G$, denoted by $d_{G}(u, v)$, is the length
of a shortest $u$-$v$ path in $G$; if there is no path connecting
$u$ and $v$, we define $d_{G}(u, v) = \infty$. The {\em diameter} of
a graph $G$, denoted by $\diam(G)$, is the maximum distance between any two vertices of $G$. Clearly, $\diam(G) = \infty$ if and only if $G$ is disconnected.

A subset $S \subseteq V$ is called a {\em dominating set} ({\bf DS})
of a graph $G$ if every vertex $v \in V$ is an element of $S$ or is
adjacent to a vertex in $S$, that is, $N_{G}[S] = V$. The {\em
domination number} of $G$, denoted by $\gamma(G)$, is the minimum
cardinality of a dominating set in $G$.

A subset $S \subseteq V$ is a {\em total dominating set},
abbreviated {\bf TDS}, of $G$ if every vertex in $V$ is adjacent to
a vertex in $S$, that is $N_{G}(S) = V$. Every graph without
isolated vertices has a TDS, since $V$ is a trivial TDS. The
{\em total domination number} of a graph $G$, denoted by
$\gamma_{t}(G)$, is the minimum cardinality of a TDS in $G$. For the graph with isolated vertices, we define its total domination number to be $\infty$. Total
domination in graphs was introduced by Cockayne, Dawes, and Hedetniemi
\cite{MR0584887}.

For two vertex subsets $X$ and $Y$, we say that {\em $X$ dominates $Y$} ({\em totally dominates $Y$}, respectively) if $Y \subseteq N_{G}[X]$ ($Y \subseteq N_{G}(X)$, respectively); sometimes, we also say that {\em $Y$ is dominated by $X$} ({\em totally dominated by $X$}, respectively).

For three vertices $u, v, w\in V(G)$, the symbol $uv\rightarrow w$ means that $\{u, v\}$ dominates $G - w$,
but $uw \notin E(G)$, $vw \notin E(G)$ and $uv \in E(G)$.

A graph $G$ is said to be {\em diameter-$d$ edge-critical} if $\diam(G) = d$
and $\diam(G - e) > \diam(G)$ for any edge $e \in E(G)$. Gliviak \cite{Gliviak1975} proved the impossibility of characterization of diameter-$d$ edge-critical graphs by finite extension or by forbidden subgraphs. Plesn\'{i}k
\cite{Plesn'ik1975} observed that all known minimal graphs of
diameter two on $n$ vertices have no more than $\left \lfloor
\frac{n^{2}}{4} \right \rfloor $ edges. Independently, Murty and
Simon (see \cite{Caccetta1979}) conjectured the following:
\begin{Murty-Simon}\label{MurtySimonConjecture}
If $G$ is a diameter-$2$ edge-critical graph on $n$ vertices, then $|E(G)| \leq \left
\lfloor \frac{n^{2}}{4} \right \rfloor$. Moreover, equality holds if
and only if $G$ is the complete bipartite graph $K_{\left\lfloor
\frac{n}{2} \right\rfloor, \left\lceil \frac{n}{2} \right\rceil}$.
\end{Murty-Simon}

Let $G$ be a diameter-$2$ edge-critical graph on $n$ vertices. Plesn\'{i}k
\cite{Plesn'ik1975} proved that $|E(G)| < 3n(n-1)/8$. Caccetta and H\"{a}ggkvist \cite{Caccetta1979} obtained that $|E(G)| < 0.27n^{2}$. Fan \cite{Fan1987} proved  the first part of the Murty-Simon Conjecture for $n \leq 24$ and for $n=26$; and 
\[
|E(G)| < \frac{1}{4}n^{2} + (n^{2}-16.2n + 56)/320 < 0.2532 n^{2}\]
for $n \geq 25$.
F\"{u}redi \cite{Furedi1992} proved the Murty-Simon Conjecture for $n > n_{0}$, where $n_{0}$ is not larger than a tower of $2$'s of height about $10^{14}$.

A graph is {\em total domination edge critical} if the addition of any edge decrease the total domination number. If $G$ is total domination edge critical with $\gamma_{t}(G) = k$, then we say that $G$ is a {\em $k$-$\gamma_{t}$-edge critical graph}. Haynes \etal \cite{Haynes1998} proved that the addition of an edge to a graph without isolated vertices can decrease the total domination number by at most two. A graph $G$ with the property that $\gamma_{t}(G) = k$ and $\gamma_{t}(G + e) = k-2$ for every missing edge $e$ in $G$ is called a {\em $k$-supercritical} graph.

\begin{theorem}[Hanson and Wang \cite{Hanson2003}]\label{DominationEdgeVsDiameterTwo}
A nontrivial graph $G$ is dominated by two adjacent vertices if and only if the
diameter of $G^{c}$ is greater than two.
\end{theorem}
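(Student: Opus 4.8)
The plan is to prove both directions by directly translating the two defining conditions---``$u$ and $v$ are adjacent'' and ``$\{u,v\}$ dominates $G$''---into statements about the complement $G^{c}$, and then to observe that, taken together, they are exactly equivalent to $d_{G^{c}}(u,v) > 2$. The whole argument rests on a short dictionary between $G$ and $G^{c}$: a pair $u,v$ is adjacent in $G$ precisely when it is \emph{non}-adjacent in $G^{c}$; and $\{u,v\}$ dominates $G$ precisely when every other vertex $w$ fails to be adjacent in $G$ to at least one of $u,v$, which is to say that $u$ and $v$ have \emph{no common neighbour} in $G^{c}$.

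First I would establish the forward implication. Assuming $G$ is dominated by two adjacent vertices $u,v$, adjacency in $G$ gives $uv \notin E(G^{c})$, so $d_{G^{c}}(u,v) \geq 2$. Domination says each $w \notin \{u,v\}$ lies in $N_{G}(u) \cup N_{G}(v)$, so $w$ is non-adjacent in $G^{c}$ to at least one of $u,v$; hence no vertex of $G^{c}$ is adjacent to both, i.e.\ $u$ and $v$ share no common neighbour in $G^{c}$. A pair that is non-adjacent and has no common neighbour is at distance more than $2$, so $d_{G^{c}}(u,v) > 2$ and therefore $\diam(G^{c}) > 2$.

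For the converse I would run exactly the same equivalences in reverse. If $\diam(G^{c}) > 2$, choose $u,v$ with $d_{G^{c}}(u,v) > 2$; being at distance more than $2$ forces $u,v$ to be non-adjacent in $G^{c}$, hence adjacent in $G$, and to have no common neighbour in $G^{c}$, hence every other vertex is adjacent in $G$ to $u$ or to $v$. Thus $u,v$ are adjacent and dominate $G$, as required.

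Since this is essentially a pure complementation exercise, I do not expect a genuine obstacle. The one point demanding care is the case $d_{G^{c}}(u,v) = \infty$, arising when $G^{c}$ is disconnected. I would avoid any separate case analysis by consistently reading ``$d_{G^{c}}(u,v) > 2$'' as the combinatorial condition ``$u,v$ are non-adjacent and have no common neighbour in $G^{c}$'', which captures the finite distance-at-least-$3$ case and the infinite-distance case uniformly. The hypothesis that $G$ is nontrivial serves only to guarantee that such a pair $u,v$ can exist at all, so the equivalence is not vacuous.
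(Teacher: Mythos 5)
Your proof is correct: the paper states this result as a citation to Hanson and Wang and gives no proof of its own, and your complementation argument (adjacency in $G$ equals non-adjacency in $G^{c}$, domination by $\{u,v\}$ equals absence of a common neighbour of $u,v$ in $G^{c}$) is exactly the standard way to establish it, with the disconnected case of $G^{c}$ handled properly by reading $d_{G^{c}}(u,v)>2$ combinatorially.
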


\begin{corollary}%
A graph $G$ is diameter-$2$ edge-critical on $n$ vertices if and only if the total domination number of $G^{c}$ is greater than two but the addition of any edge in $G^{c}$ decrease the total domination number to be two, that is, $G^{c}$ is $K_{1} \cup K_{n-1}$ or $3$-$\gamma_{t}$-edge critical or $4$-supercritical.
\end{corollary}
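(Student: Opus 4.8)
The plan is to obtain this corollary as an essentially mechanical translation of the theorem of Hanson and Wang (Theorem~\ref{DominationEdgeVsDiameterTwo}), bridged by two elementary observations that convert domination statements about $G$ into total-domination statements about $G^{c}$. The first observation I would record is that a graph $F$ is dominated by two adjacent vertices if and only if $\gamma_{t}(F) = 2$. Indeed, if $\{u, v\}$ is a total dominating set then $N_{F}(\{u, v\}) = V(F)$ forces $u \in N_{F}(\{u, v\})$, so (there being no loops) $u$ is adjacent to $v$; conversely, two adjacent vertices dominating $F$ form a total dominating set, since their mutual adjacency totally dominates the pair itself. As no single vertex can be its own neighbor, $\gamma_{t}(F) \geq 2$ always, so ``$\gamma_{t}(F) = 2$'' is precisely ``dominated by two adjacent vertices.'' Feeding this into Theorem~\ref{DominationEdgeVsDiameterTwo} yields, for every nontrivial $F$, the clean form $\gamma_{t}(F) = 2 \iff \diam(F^{c}) > 2$.

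The second ingredient is the complement--edge correspondence $(G - e)^{c} = G^{c} + e$, under which deleting an edge of $G$ becomes adding one to $G^{c}$, while the edges of $G$ are exactly the missing edges of $G^{c}$. Applying the reformulated theorem to $F = G^{c}$ gives $\diam(G) > 2 \iff \gamma_{t}(G^{c}) = 2$, hence $\diam(G) \leq 2 \iff \gamma_{t}(G^{c}) > 2$; applying it to $F = G^{c} + e$ for a missing edge $e$ gives $\diam(G - e) > 2 \iff \gamma_{t}(G^{c} + e) = 2$. Chaining these equivalences turns ``$G$ is diameter-$2$ edge-critical'' into ``$\gamma_{t}(G^{c}) > 2$ and $\gamma_{t}(G^{c} + e) = 2$ for every missing edge $e$ of $G^{c}$,'' which is exactly the criterion in the statement.

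A boundary point I would verify is that this criterion really pins down $\diam(G) = 2$ and not merely $\diam(G) \leq 2$: the only graphs with $\diam(G) < 2$ are complete, \ie those with $G^{c}$ edgeless, and then adding a single edge to $G^{c}$ still leaves isolated vertices, so $\gamma_{t}(G^{c} + e) = \infty \neq 2$; thus the second condition already excludes the complete case. It then remains to split the admissible graphs into the three named families. Invoking the bound of Haynes \etal that adding an edge lowers $\gamma_{t}$ by at most $2$, I would argue: if $\gamma_{t}(G^{c})$ is finite then $\gamma_{t}(G^{c}) - 2 \leq \gamma_{t}(G^{c} + e) = 2$ forces $\gamma_{t}(G^{c}) \in \{3, 4\}$, giving the $3$-$\gamma_{t}$-edge critical case (decrease by $1$) and the $4$-supercritical case (decrease by $2$). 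If $\gamma_{t}(G^{c}) = \infty$, then $G^{c}$ has an isolated vertex; it has exactly one, say $w$, because a missing edge avoiding a second isolated vertex would keep $\gamma_{t} = \infty$, and $G^{c} - w$ must be complete, because a missing edge inside $V(G^{c}) \setminus \{w\}$ would, after addition, still leave $w$ isolated. Hence $G^{c} = K_{1} \cup K_{n-1}$.

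I expect the main difficulty to be bookkeeping at these boundaries rather than anything deep: the entire substance is carried by Theorem~\ref{DominationEdgeVsDiameterTwo}, and the only genuine care is needed in the isolated-vertex regime, where total domination numbers are infinite and one cannot appeal to the ``decrease by at most $2$'' bound. There I would also confirm the forward direction, that $K_{1} \cup K_{n-1}$ indeed satisfies both conditions: its missing edges are precisely the edges from $w$ to the clique, and adding any one of them, say $wu$, lets any two adjacent clique vertices $\{u, u'\}$ totally dominate the whole graph, so $\gamma_{t}(K_{1} \cup K_{n-1} + wu) = 2$.
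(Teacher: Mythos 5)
The paper states this corollary without proof, treating it as immediate from Theorem~\ref{DominationEdgeVsDiameterTwo} together with the definitions and the cited ``decrease by at most two'' result of Haynes \etal; your argument is precisely that intended derivation, and it is correct (apart from the degenerate case $n\le 2$, where the statement itself is vacuous or fails trivially and which the paper also ignores). Nothing needs to be added.
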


The complement of $G$ is $K_{1} \cup K_{n-1}$ if and only if $G$ is $K_{1, n-1}$. Clearly, the Murty-Simon Conjecture holds for $K_{1, n-1}$.

The $4$-supercritical graphs are characterized in \cite{Merwe1998}.

\begin{theorem}
A graph $H$ is $4$-supercritical if and only if $H$ is the disjoint union of two nontrivial complete graphs. 
\end{theorem}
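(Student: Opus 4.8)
The plan is to prove both implications, handling the straightforward direction first. For the ``if'' direction, suppose $H = K_p \cup K_q$ with $p, q \ge 2$. Since a total dominating set must totally dominate each component separately and $\gamma_t(K_m) = 2$ for every $m \ge 2$, we get $\gamma_t(H) = 2 + 2 = 4$. Every missing edge of $H$ joins a vertex $u$ of one clique to a vertex $v$ of the other, and in $H + uv$ the pair $\{u, v\}$ is adjacent while $N_{H+uv}(u) \cup N_{H+uv}(v)$ already exhausts $V(H)$; hence $\gamma_t(H+uv) = 2 = 4-2$, so $H$ is $4$-supercritical.

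For the ``only if'' direction, assume $H$ is $4$-supercritical. First I would observe that $H$ has no isolated vertex, since $\gamma_t(H) = 4 < \infty$. The heart of the argument is the following claim: \emph{for every missing edge $uv$, the pair $\{u,v\}$ itself is a total dominating set of $H+uv$}; equivalently, every vertex other than $u,v$ is adjacent in $H$ to $u$ or to $v$. To prove it, take any $2$-element TDS $\{a,b\}$ of $H+uv$ (one exists since $\gamma_t(H+uv)=2$). If $\{a,b\} \cap \{u,v\} = \emptyset$, then adding $uv$ changes neither neighborhood nor the edge $ab$, so $\{a,b\}$ is already a TDS of $H$, contradicting $\gamma_t(H)=4$. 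If $\{a,b\}$ meets $\{u,v\}$ in exactly one vertex, say $a=u$ and $b \notin \{u,v\}$, then $v$ is the only vertex whose domination status is altered by the new edge, so $\{u,b\}$ totally dominates $V(H) \setminus \{v\}$ in $H$; picking any neighbor $x$ of $v$ (which exists and differs from $u$ and $b$, since neither is adjacent to $v$) yields a $3$-element TDS $\{u,b,x\}$ of $H$, again contradicting $\gamma_t(H)=4$. Hence $\{a,b\} = \{u,v\}$, establishing the claim.

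Next I would show that $H$ is a disjoint union of cliques, i.e.\ that it has no induced $P_3$. If $x$-$y$-$z$ were an induced $P_3$ (so $xz$ is a missing edge), then the claim applied to $xz$ makes every vertex other than $x,z$ adjacent to $x$ or $z$, and since $y$ is adjacent to both, the set $\{x,y,z\}$ would be a TDS of $H$ of size $3$, contradicting $\gamma_t(H)=4$. Therefore every component of $H$ is a clique, and each is nontrivial because $H$ has no isolated vertices. Finally, if $H$ is the disjoint union of $t$ nontrivial cliques then $\gamma_t(H)=2t$, so $\gamma_t(H)=4$ forces $t=2$ and $H = K_p \cup K_q$ with $p,q \ge 2$.

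The main obstacle is the central claim, and within it the case where the witnessing $2$-TDS shares exactly one endpoint with the added edge: one must check that precisely one vertex changes status and then reconstruct an explicit size-$3$ TDS from a neighbor of that vertex. Once the claim is in hand, the $P_3$-free reduction and the component count are routine. As a sanity check, one could instead translate the hypotheses through Theorem~\ref{DominationEdgeVsDiameterTwo} into the statement that $H^{c}$ is triangle-free and diameter-$2$ edge-critical; but this alone is insufficient (the Petersen graph satisfies it while being far from $K_{p,q}$), so the condition $\gamma_t(H)=4$ is genuinely needed, and the direct argument above is the cleaner route.
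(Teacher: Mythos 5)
Your proof is correct. Note that the paper itself gives no argument for this theorem: it is quoted as a known characterization from \cite{Merwe1998}, so there is no in-paper proof to compare against. Your self-contained derivation is sound: the ``if'' direction is routine, and in the ``only if'' direction the key claim --- that the witnessing $2$-element TDS of $H+uv$ must be $\{u,v\}$ itself --- is established cleanly. In the case where the TDS $\{u,b\}$ meets $\{u,v\}$ in one vertex, the one point you leave slightly implicit is why $b$ is not adjacent to $v$; this does follow (otherwise $\{u,b\}$ would already be a TDS of $H$, contradicting $\gamma_t(H)=4$), and it is exactly what guarantees your neighbor $x$ of $v$ is distinct from $u$ and $b$ and that $\{u,b,x\}$ is a genuine $3$-element TDS (with $x$ itself dominated because $x\neq v$). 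The subsequent step --- no induced $P_3$, hence a disjoint union of nontrivial cliques, hence exactly two of them since $\gamma_t$ adds over components --- is correct and is presumably close in spirit to the original argument of van der Merwe, Mynhardt and Haynes. Your closing remark is also well taken: the translation via Theorem~\ref{DominationEdgeVsDiameterTwo} alone does not pin down the structure, so the direct total-domination argument is the right route.
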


The complement of a $4$-supercritical graph is a complete bipartite graph. The Murty-Simon Conjecture holds for the graphs whose complements are $4$-supercritical, \ie complete bipartite graphs.

Therefore, we only have to consider the graphs whose complements are $3$-$\gamma_{t}$-edge critical. 

For $3$-$\gamma_{t}$-edge critical graphs, the bound on the diameter is established in \cite{Haynes1998}.
\begin{theorem}
If $G$ is a $3$-$\gamma_{t}$-edge critical graph, then $2 \leq \diam(G) \leq 3$.
\end{theorem}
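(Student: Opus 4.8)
The plan is to handle the two inequalities separately, with the upper bound being the substantive part. First I would record that $\gamma_{t}(G)=3<\infty$ forces $G$ to have no isolated vertices and to be connected: a disconnected graph without isolated vertices splits into at least two nontrivial components, each requiring at least two vertices to be totally dominated, so its total domination number would be at least $4$. Hence $\diam(G)$ is finite. For the lower bound $\diam(G)\geq 2$, I would note that if $\diam(G)=1$ then $G$ is complete, and any two vertices of a complete graph on at least two vertices form a total dominating set, giving $\gamma_{t}(G)=2$, a contradiction. Thus $\diam(G)\geq 2$.

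For the upper bound, I would argue by contradiction, assuming $\diam(G)\geq 4$ and choosing vertices $u,v$ with $d_{G}(u,v)\geq 4$. Since $u$ and $v$ are nonadjacent, $uv$ is a missing edge, so the edge-critical hypothesis gives $\gamma_{t}(G+uv)<3$; as $\gamma_{t}$ is always at least $2$, we get $\gamma_{t}(G+uv)=2$. Let $\{x,y\}$ be a total dominating set of $G+uv$ (necessarily with $xy\in E(G+uv)$). The crucial structural remark is that $G+uv$ differs from $G$ only in the adjacency of the pair $\{u,v\}$, so a vertex outside $\{u,v\}$ has the same neighbourhood in both graphs. Consequently $\{x,y\}$ must meet $\{u,v\}$: otherwise $\{x,y\}$ would already totally dominate $G$, contradicting $\gamma_{t}(G)=3$. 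So I may assume $u\in\{x,y\}$.

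This leaves two cases. If $\{x,y\}=\{u,v\}$, then every vertex $w\notin\{u,v\}$ is adjacent in $G$ to $u$ or to $v$; taking $w$ to be the vertex at distance $2$ from $u$ on a shortest $u$--$v$ path (which lies at distance at least $2$ from $v$ as well, since $d_{G}(u,v)\geq 4$) yields a vertex dominated by neither $u$ nor $v$, a contradiction. If instead $\{x,y\}=\{u,y\}$ with $y\notin\{u,v\}$, then $uy\in E(G)$ and every vertex other than $v$ is adjacent in $G$ to $u$ or to $y$, hence lies within distance $2$ of $u$; but the vertex at distance $3$ from $u$ on a shortest $u$--$v$ path is distinct from $v$ and at distance $3$, again a contradiction. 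Either way $\diam(G)\leq 3$.

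The main obstacle, and the point I would treat most carefully, is the bookkeeping around which adjacencies change when passing from $G$ to $G+uv$: the whole argument rests on the fact that only the neighbourhoods of $u$ and $v$ are altered, so that a size-$2$ total dominating set of $G+uv$ that avoids $\{u,v\}$ transfers verbatim to $G$. Once that is pinned down, choosing an interior vertex of a shortest $u$--$v$ path at the right distance (distance $2$ in the first case, distance $3$ in the second) delivers the contradiction in each case, and the threshold $d_{G}(u,v)\geq 4$ is exactly what makes both choices valid.
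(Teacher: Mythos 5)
Your argument is correct and complete. Note that the paper itself gives no proof of this statement---it is quoted from Haynes, Mynhardt and van der Merwe \cite{Haynes1998}---but your reasoning (adding an edge between two vertices at distance at least $4$, observing that the resulting two-element total dominating set must meet $\{u,v\}$ because only the $u$--$v$ adjacency changes, and then deriving a contradiction from a suitably chosen interior vertex of a shortest $u$--$v$ path) is exactly the standard argument for this bound.
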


Hanson and Wang \cite{Hanson2003} proved the first part of the Murty-Simon Conjecture for the graphs whose complements have diameter three. Recently, Haynes, Henning, van der Merwe and Yeo \cite{Haynes2011a} proved the second part for the graphs whose complements are $3$-$\gamma_{t}$-edge critical graphs with diameter three but only with even vertices. Also, Haynes \etal \cite{Haynes2012} proved the Murty-Simon Conjecture for the graphs of even order whose complements have vertex connectivity $\ell$, where $\ell = 1, 2, 3$. Haynes, Henning and Yeo \cite{Haynes2011} proved the Murty-Simon Conjecture for the graphs whose complements are claw-free.

In this paper, we prove the Murty-Simon Conjecture for the graphs whose complements are $3$-$\gamma_{t}$-edge critical graphs with diameter three, not only with even vertices but also odd ones. This theorem includes the result obtained by Haynes \etal \cite{Haynes2011}. We use the technique developed in \cite{Haynes2011}, and the proof is processed by a series of claims, a few claims are the same with them in \cite{Haynes2011}, but to make the paper self contained, we give a full proof of them.

Let $G$ be a $3$-$\gamma_{t}$-edge critical graph. Then the addition of any edge $e$ decrease the total domination number to be two, that is, $G + e$ is dominated by two adjacent vertices $x$ and $y$; we call such edge $xy$ {\em quasi-edge} of $e$. Note that $xy$ must contain at least one end of $e$. Clearly, quasi-edge of $e$ may not be unique. If $xy \mapsto w$, then $xy$ is quasi-edge of the missing edge $xw$, and also quasi-edge of missing edge $yw$; conversely, if $xy$ is quasi-edge of a missing edge, then there exists an unique vertex $w$ such that $xy \mapsto w$. So, if $xy \rightarrow w$, we write $\un(xy) = w$.

From the definition of $3$-$\gamma_{t}$-edge critical graph, we have the following frequently used observation.
\begin{observation}%
If $G$ is a $3$-$\gamma_{t}$-edge critical graph and $uv$ is a missing edge in it, then either
\begin{enumerate}[(i)]
\item $\{u, v\}$ dominates $G$; or
\item there exists a vertex $z$ such that $uz \mapsto v$ or $zv \mapsto u$.
\end{enumerate}
\end{observation}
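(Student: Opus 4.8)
The plan is to convert the edge-criticality of $G$ into a statement about the single added edge $uv$ and then read off the two alternatives from a minimum total dominating set of $G + uv$. First I would observe that since $\gamma_{t}(G) = 3$ is finite, $G$ has no isolated vertex, and adding $uv$ creates none, so $\gamma_{t}(G + uv) \geq 2$; criticality gives $\gamma_{t}(G + uv) < 3$, hence $\gamma_{t}(G + uv) = 2$. Therefore $G + uv$ has a total dominating set $\{x, y\}$, and since a two-vertex set totally dominates only when each of its vertices has a neighbour inside the set, $x$ and $y$ are adjacent in $G + uv$; that is, $xy$ is a quasi-edge of $uv$. Everything then turns on whether this quasi-edge is the newly added edge or an edge already present in $G$.

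If $xy = uv$, then $\{u, v\}$ totally dominates $G + uv$. As the only new adjacency is between $u$ and $v$, every vertex $w \notin \{u, v\}$ keeps the same neighbours in $G + uv$ as in $G$ and is therefore adjacent in $G$ to $u$ or $v$; together with $u, v \in \{u, v\}$ this yields $N_{G}[\{u, v\}] = V(G)$, which is precisely alternative (i).

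Otherwise $xy \in E(G)$. Here I would first show that $\{x, y\}$ meets $\{u, v\}$: if it did not, deleting $uv$ would not alter the neighbourhoods of $x$ or $y$, so $\{x, y\}$ would already totally dominate $G$, contradicting $\gamma_{t}(G) = 3$. Since $uv \notin E(G)$ forces $xy \neq uv$, exactly one endpoint of $xy$ lies in $\{u, v\}$; say $x = u$ and $y \notin \{u, v\}$, the case $x = v$ being symmetric. For every $w \neq u, v$ the neighbourhoods are unchanged, so total domination of $G + uv$ shows that $\{u, y\}$ dominates $G - v$, and we already have $uy \in E(G)$ and $uv \notin E(G)$.

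The only remaining requirement for $uy \mapsto v$ is the non-adjacency $yv \notin E(G)$, and this is the step I expect to be the crux, since it must again be forced by criticality rather than read off directly. I would argue that if $yv$ were an edge, then $v$ would be dominated by $y$ without any help from the added edge, so $\{u, y\}$ would totally dominate $G$ itself, once more contradicting $\gamma_{t}(G) = 3$. Hence $yv \notin E(G)$ and $uy \mapsto v$; the symmetric case $x = v$ produces $zv \mapsto u$ with $z = y$, giving alternative (ii) and completing the dichotomy.
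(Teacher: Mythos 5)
Your proof is correct and is essentially the argument the paper leaves implicit: the paper states this Observation without proof, as a direct unfolding of the definition of a $3$-$\gamma_{t}$-edge critical graph, and your case analysis on whether the quasi-edge of $uv$ equals $uv$ or is an existing edge meeting $\{u,v\}$ is exactly that unfolding. All the steps, including the verification that $yv \notin E(G)$ via the contradiction with $\gamma_{t}(G)=3$, check out against the paper's definition of $uz \mapsto v$.
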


For notation and terminology not defined here, we refer the reader to \cite{Haynes2011a}.
\section{Main results}
\begin{theorem}%
If $G$ is a $3$-$\gamma_{t}$-edge critical graph on $n$ vertices with diameter three , then $|E(G^{c})| < \Lfloor\frac{n^{2}}{4}\Rfloor$.
\end{theorem}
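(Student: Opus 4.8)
The plan is to translate the statement into a bound on $F := G^{c}$, which by the Corollary is diameter-$2$ edge-critical, and to show that the Hanson--Wang upper bound $|E(F)| \le \Lfloor\frac{n^{2}}{4}\Rfloor$ is never attained here, since equality would force $F$ to be complete bipartite and hence $G$ to be a disjoint union of two nontrivial complete graphs---a $4$-supercritical, disconnected graph with $\gamma_{t} = 4$, contradicting both $\gamma_{t}(G) = 3$ and $\diam(G) = 3$. To exploit the hypothesis $\diam(G) = 3$, I would fix a diametral pair $u, v$ with $d_{G}(u, v) = 3$ and partition $V(G) = \{u, v\} \sqcup A \sqcup B \sqcup C$, where $A = N_{G}(u)$ and $B = N_{G}(v)$ (disjoint, since $u, v$ have no common neighbor) and $C = V \setminus N_{G}[\{u,v\}]$. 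Writing $a = |A|$, $b = |B|$, and letting $e_{F}(\cdot)$ and $e_{F}(\cdot, \cdot)$ count the missing edges of $G$ (\ie the edges of $F$) inside one part and across two parts, the whole proof reduces to controlling these quantities using the critical property.

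The key steps would apply the Observation to individual missing edges. Applied to $uv$ itself, it yields either that $\{u, v\}$ dominates $G$, whence $C = \emptyset$, or a vertex $z$ with $uz \mapsto v$ or $zv \mapsto u$, which forces $C$ to be small and heavily joined to $A \cup B$; I would treat the former, principal case first. When $C = \emptyset$ the count of $F$-edges collapses to
\[
|E(F)| = (n - 1) + e_{F}(A) + e_{F}(B) + e_{F}(A, B),
\]
because $[\{u\}, A]$ and $[\{v\}, B]$ are full in $G$, so the only $F$-edges meeting $\{u, v\}$ are $uv$ together with $[\{u\}, B]$ and $[\{v\}, A]$, contributing $1 + b + a = n - 1$. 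Following the claims of \cite{Haynes2011}, I would then use the critical property on the missing edges lying inside $A$, inside $B$, and across $[A, B]$ to show that the bracketed sum never exceeds $ab$, the bound being approached only by the ``two-clique'' configuration in which $A$ and $B$ induce complete graphs in $G$ and $[A, B]$ is empty in $G$ (each missing edge inside a part being ``paid for'' by a $G$-edge across $[A,B]$). Since $a + b = n - 2$ gives $ab \le \Lfloor\frac{(n-2)^{2}}{4}\Rfloor$, the identity $(n-1) + \Lfloor\frac{(n-2)^{2}}{4}\Rfloor = \Lfloor\frac{n^{2}}{4}\Rfloor$ recovers the bound, with equality requiring simultaneously that $A, B$ are cliques, that $[A, B]$ is empty in $G$, and that $a, b$ are as balanced as possible.

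Strictness is then immediate from $d_{G}(u, v) = 3$: a shortest $u$-$v$ path meets $A$ then $B$, giving $a' \in A$ and $b' \in B$ with $a'b' \in E(G)$, so $[A, B]$ carries at least one edge of $G$; this alone violates the equality condition ``$[A, B]$ empty in $G$''. Equivalently, the extremal configuration is exactly $K_{a+1} \cup K_{b+1}$, which is $4$-supercritical and disconnected, and so cannot be our $G$. The main obstacle I anticipate is the odd case, which is precisely where the earlier work was incomplete. For $n$ odd the quantity $a + b = n - 2$ is odd, the optimal split is unbalanced, and already $(n - 1) + \Lfloor\frac{(n-2)^{2}}{4}\Rfloor$ sits exactly on the floor $\frac{n^{2}-1}{4}$; there is no rounding slack to absorb error, so the single forced cross edge must be reconciled carefully with the near-completeness claims, and one must rule out that a stray missing edge inside $A$ or $B$ could be compensated by rebalancing $a$ and $b$. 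Carrying the residual case $C \neq \emptyset$ through the same counting, uniformly for both parities, is where the bulk of the claim-by-claim argument will lie.
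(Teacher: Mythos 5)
Your setup is sound and runs parallel to the opening of the paper's proof: the quasi-edge argument shows every missing edge inside $A$ or inside $B$ has a quasi-edge in $[A,B]$ and each edge of $[A,B]$ serves at most one such missing edge, whence $e_{F}(A)+e_{F}(B)\le e_{G}(A,B)$ and $|E(F)|\le (n-1)+ab\le\Lfloor n^{2}/4\Rfloor$ (the paper reaches the same inequality with an asymmetric partition by distance from $u_{0}$, which also lets it dispose of your residual case $C\neq\emptyset$ inside the same framework). The problem is your equality analysis. Equality in $e_{F}(A)+e_{F}(B)+e_{F}(A,B)\le ab$ does \emph{not} force ``$A$, $B$ cliques and $[A,B]$ empty in $G$''; it forces the quasi-edge map from missing edges inside the parts to $G$-edges of $[A,B]$ to be a \emph{bijection}. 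A configuration with, say, $k$ missing edges inside the parts and exactly $k$ cross edges, each the unique quasi-edge of one missing edge, attains equality while having $e_{G}(A,B)>0$. Consequently your ``strictness is immediate'' step --- producing one $G$-edge $a'b'\in[A,B]$ from a shortest $u$--$v$ path and declaring a contradiction --- does not contradict the actual equality condition. It only contradicts the two-clique configuration, which you asserted rather than derived.

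Ruling out the bijective configuration is precisely where all the work lies: in the paper this occupies Claims 2 through 10 and the final count (the exchange lemma for $2\times 2$ patterns across $[A,B]$, the nested-neighborhood structure $N_{B}(u_{1})=N_{B}(u_{2})\cup\{y\}$ forcing the missing edges inside a part to form a bipartite graph, the nonexistence of a vertex of $B$ dominating $A$ and vice versa, $|C|=1$, the existence of a non-dominating vertex of degree more than half into the other side, and a final edge count yielding $\ell=1$). None of this is present or replaceable by the parity remark you make at the end; indeed your own observation that for $n$ odd the bound sits exactly on the floor with no slack shows that the single forced cross edge cannot by itself break equality. As written, the proposal proves the non-strict inequality (in the case $C=\emptyset$) but not the theorem.
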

\begin{proof}%
Suppose, to the contrary, that $|E(G^{c})| \geq \Lfloor\frac{n^{2}}{4}\Rfloor$. Assume that $d_{G}(u_{0}, v_{0}) = 3$ and $\deg_{G}(u_{0}) \leq \deg_{G}(v_{0})$. Let $A = \{v \mid d_{G}(u_{0}, v) = 1\}$, $B = \{v \mid d_{G}(u_{0}, v) =2\}$, $C = \{v \mid d_{G}(u_{0}, v) =3\}$. Hence, $\{\{u_{0}\}, A, B, C\}$ is a partition of $V(G)$.
\begin{claim}\label{CClique}%
For every missing edge $e$ in $A$ or $B \cup C$, quasi-edges of $e$ are in $[A, B]$. Consequently, $C$ is a clique. Moreover, for every edge in $[A, B]$, it is quasi-edge of at most one missing edge in $A$ or $B \cup C$.
\end{claim}
\begin{proof}%
Suppose that $xy$ is a missing edge in $A$. Consider $G + xy$, since $\{x, y\}$ does not dominate $\{v_{0}\}$, there exists a vertex $z$ such that $xz \mapsto y$ or $zy \mapsto x$. In either case, neither $x$ nor $y$ dominate $v_{0}$, so $z$ dominates $v_{0}$, then $z \in N_{G}[v_{0}]$, and thus $z \in B$.

Suppose that $xy$ is a missing edge in $B \cup C$. Consider $G + xy$, since $\{x, y\}$ does not  dominate $\{u_{0}\}$, there exists a vertex $z$ such that $xz \mapsto y$ or $zy \mapsto x$. In either case, neither $x$ nor $y$ dominate $u_{0}$, then $z$ dominates $u_{0}$, and thus $z \in N_{G}(u_{0})=A$.

Let $uv$ be an arbitrary edge in $[A, B]$, by Observation 1, it is quasi-edge of at most one missing edge in $A$ or $B \cup C$.
\end{proof}

Now, we have
\begin{equation}\label{EQEQ}%
\Lfloor\frac{n^{2}}{4}\Rfloor \leq |E(G^{c})| \leq |A \cup \{u_{0}\}| \times |B \cup C| \leq \Lfloor\frac{n^{2}}{4}\Rfloor
\end{equation}
Therefore, equalities in \eqref{EQEQ} holds, it implies that 

\begin{claim}\label{OneToOne}%
For every missing edge $e$ in $A$ or $B \cup C$, there exists precisely one quasi-edge of $e$ in $[A, B]$; conversely, for every edge in $[A, B]$, it is the quasi-edge of a missing edge in $A$ or $B \cup C$. Moreover, $|B \cup C| = |A| + 1$ or $|A| + 2$.
\end{claim}

\begin{claim}\label{X}%
If $u_{1}, u_{2} \in A$ and $v_{1}, v_{2} \in B$, $\{u_{1}v_{1}, u_{2}v_{2}\} \subseteq E(G^{c})$ and $\{u_{1}v_{2}, u_{2}v_{1}\} \subseteq E(G)$, then $\{u_{1}u_{2}, v_{1}v_{2}\} \subseteq E(G)$.
\end{claim}
\begin{proof}%
If $u_{1}u_{2} \notin E(G)$, then both $u_{1}v_{2}$ and $u_{2}v_{1}$ are quasi-edge of $u_{1}u_{2}$, a contradiction. Similarly, we can prove that $v_{1}v_{2} \in E(G)$.
\end{proof}

\begin{claim}\label{Nei}%
If $u_{1}u_{2}$ is a missing edge in $A$ and $\deg_{B}(u_{1}) \geq \deg_{B}(u_{2})$, then $N_{B}(u_{1}) = N_{B}(u_{2}) \cup \{y\}$, where $y$ is the end (in $B$) of the quasi-edge of $u_{1}u_{2}$. Similarly, if $v_{1}v_{2}$ is a missing edge in $B$ and $\deg_{A}(v_{1}) \geq \deg_{A}(v_{2})$, then $N_{A}(v_{1}) = N_{A}(v_{2}) \cup \{x\}$, where $x$ is the end (in $A$) of the quasi-edge of $v_{1}v_{2}$. Consequently, the missing edges in $A$ (resp. in $B$) form a bipartite graph on $A$ (resp. on $B$).
\end{claim}
\begin{proof}%
Let $u_{1}u_{2}$ be a missing edge in $A$. Suppose that $N_{B}(u_{1}) \nsubseteqq N_{B}(u_{2})$ and $N_{B}(u_{2}) \nsubseteqq N_{B}(u_{1})$. Choose a vertex $v_{1} \in N_{B}(u_{2}) \setminus N_{B}(u_{1})$ and a vertex $v_{2}$ in $N_{B}(u_{1}) \setminus N_{B}(u_{2})$, then $\{u_{1}v_{1}, u_{2}v_{2}\} \subseteq E(G^{c})$ and $\{u_{1}v_{2}, u_{2}v_{1}\} \subseteq E(G)$, by Claim \ref{X}, we have $u_{1}u_{2} \in E(G)$, a contradiction. Hence $N_{B}(u_{1}) \supseteq N_{B}(u_{2})$. If $|N_{B}(u_{1}) \setminus N_{B}(u_{2})| \geq 2$, then there are at least two quasi-edge of the missing edge of $u_{1}u_{2}$, a contradiction. Therefore, $N_{B}(u_{1}) = N_{B}(u_{2}) \cup \{y\}$. Similarly, we can prove that $N_{A}(v_{1}) = N_{A}(v_{2}) \cup \{x\}$, if $v_{1}v_{2}$ is a missing edge in $B$.

In the graph formed by the missing edges in $A$, one part $X$ is the vertices of degree odd in $B$, and the other part $Y$ is the vertices of degree even in $B$. For any missing edge $uv$, $\deg_{B}(u)$ and $\deg_{B}(v)$ differ by exactly one, so one is odd and the other is even, and hence $uv$ has one end in $X$ and the other in $Y$, then the graph is bipartite. Similarly, the graph formed by the missing edges in $B$ is a bipartite graph.
\end{proof}
\begin{claim}\label{NOB}%
There exists no vertex in $B$ which dominates $A$.
\end{claim}
\begin{proof}%
Suppose that there exists a vertex $v$ in $B$ which dominates $A$. Let $A = \{u_{1}, u_{2}, \dots, u_{k}\}$ and $\un(u_{i}v) = v_{i}$ for $i \in \{1, 2, \dots, k\}$. There are $|A|$ edges in $[A, \{v\}]$, then there are $|A|$ missing edges which are incident with $v$ in $B \cup C$. Consider $G + u_{0}v$, since $\{u_{0}, v\}$ does not dominate $G$, there exists a vertex $z$ such that $u_{0}z \mapsto v$ or $vz \mapsto u_{0}$. If $u_{0}z \mapsto v$, then $u_{0}z \in E(G)$ and $z \in A$, but $\{u_{0}, z\}$ does not dominate $C$, a contradiction. We may assume that $vz \mapsto u_{0}$. Since $zu_{0} \notin E(G)$, $z \in B \cup C$ and $vz \in E(G)$. Then $B \cup C = \{v_{1}, v_{2}, \dots, v_{k}, v, z\}$ by Claim~\ref{OneToOne} and $z$ dominates $\{v_{1}, v_{2}, \dots, v_{k}\}$. If $k=1$, then $G$ is a path of length four, it is not a $3$-$\gamma_{t}$-edge critical graph, a contradiction. So $k \geq 2$. Since the quasi-edge of $v_{i}v$ is $u_{i}v$, $u_{i}$ dominates $\{v_{1}, v_{2}, \dots, v_{k}\}\setminus \{v_{i}\}$, where $i \in \{1, 2, \dots, k\}$, then $\{v_{1}, v_{2}, \dots, v_{k}\} \subseteq B$. Moreover, $B = \{v_{1}, v_{2}, \dots, v_{k}, v\}$ and $C = \{z\} = \{v_{0}\}$. For $1 \leq i < j \leq k$, we have $\{u_{i}v_{i}, u_{j}v_{j}\} \subseteq E(G^{c})$ and $\{u_{i}v_{j}, u_{j}v_{i}\} \subseteq E(G)$, by Claim \ref{X}, $\{u_{i}u_{j}, v_{i}v_{j}\} \subseteq E(G)$, hence $A$ is a clique and $B\setminus \{v\}$ is also a clique. There are $k$ missing edges in $B \cup C$, but $e_{G}(A, B) = k(k+1)-k> k\, (k > 1)$, a contradiction. 
\end{proof}

\begin{claim}\label{BCFull}%
If $|C| \geq 2$, then $[B, C]$ is full.
\end{claim}
\begin{proof}%
Let $xy$ be a missing edge in $[B, C]$, where $x \in B$ and $y \in C$. Consider $G + u_{0}y$. Since $\{u_{0}, y\}$ does not dominate $x$, there exists a vertex $z$ such that $u_{0}z \mapsto y$ or $zy \mapsto u_{0}$. If $u_{0}z \mapsto y$, then $u_{0}z \in E(G)$ and $z \in A$, but $\{u_{0}, z\}$ does not dominate $C\setminus \{y\}$, a contradiction. We may assume that $zy \mapsto u_{0}$. Since $zy \in E(G)$, $z \in B$ and $z$ dominates $A$, which contradicts Claim \ref{NOB}.
\end{proof}
\begin{claim}\label{Aclique}%
If $|C| \geq 2$, then $A$ is a clique.
\end{claim}
\begin{proof}%
Suppose to the contrary that $xy$ is a missing edge in $A$. Consider $G + xv_{0}$. Neither $x$ nor $v_{0}$ dominate $y$, then there exists a vertex $z$ such that $xz \mapsto v_{0}$ or $zv_{0} \mapsto x$. If $zv_{0} \mapsto x$, then $zv_{0} \in E(G)$ and $\{z, v_{0}\}$ does not dominate $u_{0}$, a contradiction. We may assume that $xz \mapsto v_{0}$, then $zv_{0} \notin E(G)$, by Claim \ref{CClique} and \ref{BCFull}, $z \in \{u_{0}\} \cup A$, but $\{x, z\}$ does not dominate $C\setminus \{v_{0}\}$, a contradiction.
\end{proof}

\begin{claim}%
$|C| = 1$
\end{claim}
\begin{proof}%
Suppose that $|C| \geq 2$. If $B$ is a clique, then $B \cup C$ and $A$ are all cliques by Claim \ref{CClique}, \ref{BCFull} and \ref{Aclique}, consequently, $e_{G}(A, B)= 0$ by Claim~\ref{OneToOne} and $G$ is disconnected, a contradiction. We may assume that $B$ is not a clique. Let $xy$ be a missing edge in $B$. Consider $G + u_{0}x$. Neither $u_{0}$ nor $x$ dominates $y$, then there exists a vertex $z$ such that $u_{0}z \mapsto x$ or $zx \mapsto u_{0}$. If $u_{0}z \mapsto x$, then $u_{0}z \in E(G)$ and $z \in A$, but $\{u_{0}, z\}$ does not dominate $C$, a contradiction. We may assume that $zx \mapsto u_{0}$. Since $u_{0}z \notin E(G)$, $z \in B\cup C$, indeed $z \in B$; otherwise, $z \in C$ and $x$ dominates $A$, which contradicts Claim \ref{NOB}. Since $\{z, x\} \subseteq B$ dominates $A$, $e_{G}(A, \{z, x\}) \geq |A|$. By Claim \ref{Aclique}, $A \cup \{u_{0}\}$ is a clique, for any edge $e$ in $[A, \{z, x\}]$, $\un(e) \in B \cup C$, and thus $\un(e) \in B \setminus \{z, x\}$ since $[B, C]$ is full and $zx \in E(G)$. But $|B \setminus \{z, x\}| < |A|$, therefore, there exists $\{e, e'\} \in [A, \{z, x\}]$ such that $\un(e) = \un(e')=w \in B\setminus \{x, z\}$. By Claim \ref{OneToOne}, $e$ and $e'$ has no common end in $B$, hence $\{xw, zw\} \in E(G^{c})$, which contradicts the fact that $\{x, z\}$ totally dominates $G - u_{0}$.
\end{proof}

\begin{claim}\label{NOA}
No vertex in $A$ dominates $B$.
\end{claim}
\begin{proof}%
Suppose that $u \in A$ dominates $B$. Hence, for every edge $e \in [u, N_{B}(v_{0})]$, $\un(e) \in A$, and for different edge $e, e' \in [u, N_{B}(v_{0})]$, $\un(e) \neq \un(e')$. Therefore, $|A| \geq |\{u\} \cup \{\un(e) \mid e \in [u, N_{B}(v_{0})]\}| \geq 1+ \deg_{G}(v_{0}) \geq 1+\deg_{G}(u_{0})$, a contradiction. 
\end{proof}
\begin{claim}
$N_{B}(v_{0}) = B$.
\end{claim}
\begin{proof}
Suppose that $N_{B}(v_{0}) \varsubsetneq B$. Consider $G + u_{0}v_{0}$. Since $\{u_{0}, v_{0}\}$ does not dominate $G$, there exists a vertex $z$ such that $u_{0}z \mapsto v_{0}$ or $zv_{0} \mapsto u_{0}$. If $u_{0}z \mapsto v_{0}$, then $u_{0}z \in E(G)$, $z \in A$ and $z$ dominates $B$, which contradicts Claim \ref{NOA}. If $zv_{0} \mapsto u_{0}$, then $zv_{0} \in E(G)$, $z \in B$ and $z$ dominates $A$, which contradicts Claim \ref{NOB}.
\end{proof}
\begin{claim}\label{HalfD}%
(a) There exists a vertex $w$ in $A$ such that $w$ does not dominates $A$ and $\deg_{B}(w) > \frac{|B|}{2}$. Otherwise, (b) $A$ is a clique and there exists a vertex $w$ in $B$ such that $w$ does not dominates $B$ and $\deg_{A}(w) > \frac{|A|}{2}$.
\end{claim}
\begin{proof}%
Suppose that $A$ is not a clique, let $u_{1}u_{2}$ be a missing edge in $A$. By Claim \ref{Nei}, we may assume that $\deg_{B}(u_{1}) = \deg_{B}(u_{2}) + 1$. If $\deg_{B}(u_{1}) > \frac{|B|}{2}$, then we are done by taking $w = u_{1}$. Then we may assume that $\deg_{B}(u_{1}) \leq \frac{|B|}{2}$, \ie $\deg_{B}(u_{1}) \leq \Lfloor\frac{|B|}{2}\Rfloor$, and thus $\deg_{B}(u_{2}) \leq \Lfloor\frac{|B|}{2}\Rfloor -1$.

Consider $G + u_{2}v_{0}$. Neither $u_{2}$ nor $v_{0}$ dominate $u_{1}$, then there exists a vertex $z$ such that $zv_{0} \mapsto u_{2}$ or $u_{2}z \mapsto v_{0}$. If $zv_{0} \mapsto u_{2}$, then $zv_{0} \in E(G)$ and $z \in B$, but $\{z, v_{0}\}$ does not dominate $u_{0}$, a contradiction. So we have $u_{2}z \mapsto v_{0}$, then $zv_{0} \notin E(G)$ and $z \in A$ by Claim \ref{NOA}. Since $\deg_{B}(u_{2}) \leq \Lfloor\frac{|B|}{2}\Rfloor -1$, $\deg_{B}(z) \geq \Lceil \frac{|B|}{2} \Rceil + 1 > \frac{|B|}{2}$. If $z$ does not dominate $A$, then we are done by taking $w=z$. Hence, we may assume that $z$ dominates $A$. Let $\{y\} = N_{B}(u_{1}) \setminus N_{B}(u_{2})$. Since $yu_{2} \notin E(G)$, $yz \in E(G)$. Let $x = \un(yz)$. Since $\{y, z\}$ dominates $\{u_{0}, v_{0}\} \cup A$, we have $x \in B$, then $\{yu_{2}, xz, xy\} \subseteq E(G^{c})$ and $\{yz, xu_{2}\} \subseteq E(G)$ (Since $\{u_{2}, z\}$ dominates $G - v_{0}$, $xu_{2} \in E(G)$), which contradicts Claim \ref{X}.


Then we may assume that $A$ is a clique. Similarly, we can prove that there exists a vertex $w$ in $B$ such that $w$ does not dominate $B$ and $\deg_{A}(w) > \frac{|A|}{2}$.
\end{proof}


Let $\{U, W\} = \{A, B\}$. By Claim~\ref{HalfD}, we may assume that there exists a vertex $w$ in $W$ such that $w$ does not dominates $W$ and $\deg_{U}(w) > \frac{|U|}{2}$. Without loss of generality, among all such vertices in $W$, we may assume that $w$ is chosen such that $\deg_{U}(w)$ is maximum. 

\begin{claim}%
For every edge $e$ in $[\{w\}, N_{U}(w)]$, we have $\un(e) \in W$.
\end{claim} 
\begin{proof}%
Otherwise, assume that $wv \in [\{w\}, N_{U}(w)]$ and $\un(wv) = y \in U$. Let $wx$ be a missing edge in $W$. Since $wv \mapsto y$, $xv \in E(G)$. By Claim~\ref{Nei}, we have $N_{W}(v) = N_{W}(y) \cup \{w\}$ and hence $xy \in E(G)$. Now, we have $xy \in E(G)$ and $wy \notin E(G)$, \ie $y \in N_{U}(x) \setminus N_{U}(w)$, by Claim~\ref{Nei} again, we have $N_{U}(x) = N_{U}(w) \cup \{y\}$, which contradicts the fact that $\deg_{U}(w)$ is maximum among all the vertices in $W$ satisfying Claim~\ref{HalfD}.
\end{proof}

Let $N_{U}(w) = U_{1} = \{v_{1}, v_{2}, \dots, v_{\ell}\}$ and let $w_{i} = \un(wv_{i})$, where $i = 1, 2, \dots, \ell$. Then $w_{i} \neq w_{j}$ for $1 \leq i < j \leq \ell$; otherwise, both $wv_{i}$ and $wv_{j}$ are quasi-edges of $ww_{i}$, which contradicts Claim~\ref{OneToOne}. Let $W_{1} = \{w_{1}, w_{2}, \dots, w_{\ell}\}$. By Claim~\ref{Nei}, we have $N_{U}(w) = N_{U}(w_{i}) \cup \{v_{i}\}$, and by Claim~\ref{X}, $W_{1}$ and $U_{1}$ are all cliques. Moreover, every vertex $v_{i}$ dominates $U$ for $i = 1, 2, \dots, \ell$. Hence, for every edge $e$ in $[\{w_{1}\}, N_{U}(w_{1})]$, we have $\un(e) \in W\setminus (W_{1} \cup \{w\})$. Therefore, 
\begin{equation}\label{Com}%
|W| \geq |W_{1} \cup \{w\}| + |N_{U}(w_{1})| = \ell + 1 + (\ell-1) = 2\ell > |U|.
\end{equation}

If $W = A$ and $U = B$, then $|A| > |B|$, a contradiction. Then $W = B$ and $U = A$. From (\ref{Com}) and the fact that $|B| = |A|$ or $|A| + 1$, we conclude that $|B| = |A| + 1 = 2\ell$. For every edge in $[A, B]$, it is the quasi-edge of a missing edge in $B$ since $A$ is a clique by Claim~\ref{HalfD}. There are at least $\ell^{2} + \ell - 1$ edges in $[A, B]$, and there are at most $(2\ell)^{2}/4$ missing edges in $B$ by Claim \ref{Nei}. Therefore, $\ell = 1$, but it contradicts Claim~\ref{NOB}.
\end{proof}
\vskip 3mm \vspace{0.3cm} \noindent{\bf Acknowledgments.} This project was supported by NSFC (11026078).

\end{document}